\theoremstyle{plain} \newtheorem{theorem}{Theorem}[section]
\newtheorem{corollary}[theorem]{Corollary}
\newtheorem{lemma}[theorem]{Lemma}
\newtheorem{proposition}[theorem]{Proposition}
\theoremstyle{definition} \newtheorem{definition}[theorem]{Definition}
\newtheorem{remark}[theorem]{Remark}
\newtheorem{example}[theorem]{Example}
\newtheorem{notation}[theorem]{Notation}
\DeclareMathOperator{\Ric}{Ric} \DeclareMathOperator{\tr}{tr}
\DeclareMathOperator{\vol}{vol}
\newcommand{\Ent}{\mathcal{E}} \newcommand{\E}{\mathbb{E}}
\newcommand{\R}{\mathbb{R}} \newcommand{\N}{\mathbb{N}}
\newcommand{\PP}{\mathbb{P}}
\newcommand\newdot{{\kern.8pt\cdot\kern.8pt}}
\begin{document}

\title[Entropy on manifolds with time-dependent metric] {An entropy
  formula for the heat equation on manifolds with time-dependent
  metric, application to ancient solutions}

\author{Hongxin Guo} \author{Robert Philipowski} \author{Anton
  Thalmaier} \address{School of Mathematics and Information Science,
  Wenzhou University \hfill\break Wenzhou, Zhejiang 325035, China}
\email{\texttt{guo@wzu.edu.cn}} \address{Unit\'e de Recherche en
  Math\'ematiques, FSTC, Universit\'e du Luxembourg\hfill\break 6, rue
  Richard Coudenhove-Kalergi, L--1359 Luxembourg, Grand Duchy of
  Luxembourg} \email{\texttt{robert.philipowski@uni.lu},
  \texttt{anton.thalmaier@uni.lu}}

\thanks{Research supported by NSF of China (grants no. 11001203 and
  11171143) and Fonds National de la Recherche Luxembourg.}

\begin{abstract}
  We introduce a new entropy functional for nonnegative solutions of
  the heat equation on a manifold with time-dependent Riemannian
  metric.  Under certain integral assumptions, we show that this
  entropy is non-decreasing, and moreover convex if the metric evolves
  under super Ricci flow (which includes Ricci flow and fixed metrics
  with nonnegative Ricci curvature). As applications, we classify
  nonnegative ancient solutions to the heat equation according to
  their entropies. In particular, we show that a nonnegative ancient
  solution whose entropy grows sublinearly on a manifold evolving
  under super Ricci flow must be constant.  The assumption is sharp in
  the sense that there do exist nonconstant positive eternal solutions
  whose entropies grow exactly linearly in time. Some other results
  are also obtained.
\end{abstract}

\keywords{Ricci flow; Brownian motion; Entropy.}
\subjclass[2010]{53C44, 58J65}
\renewcommand{\theequation}{\arabic{section}.\arabic{equation}}
\maketitle


\section{Introduction}
Let $M$ be a smooth manifold equipped with a family $(g(t))_{t\geq 0}$
of Riemannian metrics depending smoothly on $t$, and let $u$ be a
nonnegative solution of the backward heat equation
\begin{equation}\label{backwardheat}
  \frac{\partial u}{\partial t} + \Delta_{g(t)} u = 0. 
\end{equation}
The classical Boltzmann-Shannon entropy functional is defined by
\begin{equation*}
  \operatorname{Ent}(t) = -\int_M u(t,y) \log u(t,y) \vol_{g(t)}(dy)
\end{equation*}
(provided that the integral exists).  If the metric does not depend on
$t$, then, under certain reasonable assumptions on $u$ (for instance
if $u$ grows slowly enough so that integration by parts can be
justified) the Boltzmann-Shannon entropy is non-increasing, and
moreover concave if $\Ric \geq 0$.  In the case of a compact manifold
Lim and Luo~\cite{L-L} recently studied asymptotic estimates on the
time derivative of $\operatorname{Ent}$.  However, the classical
Boltzmann-Shannon entropy has two important drawbacks:
\begin{enumerate}
\item It need not be monotone if the metric depends on $t$.

\item On noncompact manifolds it is finite only for a relatively
  narrow class of functions.  Even if $u$ is a positive constant $\neq
  1$ on $\R^n$ (equipped with the standard metric), its
  Boltzmann-Shannon entropy equals $\pm\infty$.
\end{enumerate}

In this paper we introduce a new entropy functional of
Boltzmann-Shannon type which has much better chances to be finite and
which is monotone even if the metric depends on $t$.  We fix a point
$x \in M$ and let $p(t,x,y)$ be the heat kernel of the adjoint heat
equation
\begin{equation*}
  \frac{\partial p}{\partial t} = \Delta_{g(t)} p - \frac{1}{2} \tr \frac{\partial g}{\partial t} p.
\end{equation*}
In other words, $p(t,x, \cdot)$ is the density of $X_t$ with respect
to $\vol_{g(t)}$,
\begin{equation*}
  p(t, x, y)\vol_{g(t)}(dy)=\PP\{X_t\in dy\},
\end{equation*}
where $(X_t)_{t \geq 0}$ is a $(g(t))_{t \geq 0}$-Brownian motion
started at $x$ and speeded up by the factor $\sqrt{2}$ \cite{ACT,
  Coulibaly, kuwadaphilipowski}.  We assume that $\int_M p(t, x,
y)\vol_{g(t)}(dy) = 1$ for all $t>0$, in other words that Brownian
motion on $M$ does not explode.  By a result of Kuwada and the second
author~\cite{kuwadaphilipowski} this condition is satisfied, in
particular, if $(M, g(t))$ is complete for all $t \geq 0$ and the
metric evolves under backward super Ricci flow, i.e.
\begin{equation}\label{backwardsuperricci}
  \frac{\partial g}{\partial t} \leq 2 \Ric.
\end{equation}
We define the entropy of $u$ with respect to the heat kernel measure
$$p(t, x, y)\vol_{g(t)}(dy)$$ by
\begin{align}
  \Ent(t)  :&=  \E \left[ (u \log u)(t, X_t) \right] \nonumber \\
  & = \int_M (u \log u)(t, y) p(t, x,
  y)\vol_{g(t)}(dy). \label{entropy}
\end{align}
(From a physical point of view it would be more natural to call
$-\Ent$ entropy. Our sign convention has the advantage of avoiding
unnecessary minus signs.)  Note that in contrast to the classical
Boltzmann-Shannon entropy this entropy is well-defined for all
non-negative solutions $u$ (because the heat kernel has total mass 1
and the function $u \mapsto u \log u$ is bounded from below).
Moreover, thanks to the fast decay of the heat kernel our entropy is
finite in most cases of interest.  In the next section we will show
that under certain integral assumptions $\Ent(t)$ is non-decreasing,
and moreover convex in the case of super Ricci flow.

\begin{remark}\label{timereversal}
  If we apply the substitution $\tau := -t$, then \eqref{backwardheat}
  and \eqref{backwardsuperricci} become
  \begin{equation*}
    \frac{\partial u}{\partial \tau} = \Delta_{g(-\tau)} u
  \end{equation*}
  and
  \begin{equation}\label{superricci}
    \frac{\partial g}{\partial \tau} \geq -2 \Ric.
  \end{equation}
  In other words, with respect to the new time variable $\tau$, the
  function $u$ is a solution to the forward heat equation, and $g$
  evolves according to super Ricci flow. In particular, solutions of
  \eqref{backwardheat} that are defined for all $t \geq 0$ are the
  same as ancient solutions of the heat equation.
\end{remark}

The most important examples of super Ricci flow are of course the
Ricci flow itself \cite{chowetal, chowknopf, chowluni, morgantian,
  toppinglectures}, where \eqref{superricci} holds with equality, and
fixed metrics with nonnegative Ricci curvature. Other interesting
examples are the extended Ricci flow introduced by List~\cite{list}
and Ricci flow coupled with harmonic map flow, as studied by
M\"uller~\cite{muller2}.

Ancient solutions to the heat equation are generalizations of harmonic
functions. Yau's Liouville theorem for positive harmonic functions
states that any positive harmonic function on a noncompact manifold
with nonnegative Ricci curvature is constant \cite{Y}. However, as we
can see from the example $u(\tau,y)=e^{\tau+y}$, Yau's Liouville
theorem cannot be generalized to positive ancient solutions without
any further assumptions. Based on this observation, Souplet and
Zhang~\cite[Theorem~1.2]{S-Z} proved the following: Let $M$ be a
complete, noncompact manifold with a fixed metric of nonnegative Ricci
curvature.  If $u$ is a positive ancient solution to the heat equation
such that $\log u(\tau, y) = o(d(y)+\sqrt{|\tau|})$ near infinity,
then $u$ must be constant.

Souplet and Zhang's assumption is a pointwise one. They proved their
result by establishing a sharp gradient estimate for the heat
equation, which has many other important applications.  There has been
some recent work concerning ancient solutions for heat or more general
diffusion equations, for instance \cite{W} and \cite{Z}. A part of
these results consists in imposing certain pointwise growth
assumptions on the ancient solutions and using various gradient
estimates to conclude that such solutions must be constant.

It is desirable to give an integral assumption besides the pointwise
assumption. As an application of our entropy formula, among other
results we prove the following: Assume that $\frac{\partial
  g}{\partial t} \leq 2 \Ric$, and let $u$ be a nonnegative solution
of \eqref{backwardheat}.  If its entropy $\Ent(t)$ grows sublinearly,
i.e.\ $\lim_{t\to\infty} \Ent(t)/t = 0$, then $u$ is constant.

Our assumption that $\Ent(t)$ grows sublinearly is an integral one.
The result is sharp in the sense that there do exist nonconstant
ancient solutions whose entropies grow linearly, for instance $u(t,y)
= e^{y-t}$ whose entropy with parameter $x = 0$ satisfies $\Ent(t)=t$.

We also discuss the special case when $\Ent(t)$ is a linear function
of $t$. In this case under the assumption that $\frac{\partial
  g}{\partial t} \leq 2 \Ric$, we show that $u$ is the product of a
function depending only on $t$ and a function depending only on $y$.

\section{Monotonicity and convexity of the entropy}
In this section we derive formulas for the first two variations of the
entropy. We shall see that the entropy $\Ent(t),$ under certain
assumptions, is non-decreasing. Moreover $\Ent(t)$ is convex if
$\frac{\partial g}{\partial t} \leq 2 \Ric$.

\begin{theorem}\label{derivatives of entropy}
  Let $u$ be a solution of the backward heat
  equation~\eqref{backwardheat}. Suppose that for $t>0$,
  \begin{equation}\label{cond1}
    \int_M |\nabla(u \log u)|^2(t,y)\,p(t,x,y) \vol_{g(t)}(dy)< \infty
  \end{equation}
  and
  \begin{equation}\label{cond2}
    \int_M \left| \nabla \left( \frac{|\nabla u|^2}{u} \right) \right|^2(t,y)\,
    p(t,x,y)\vol_{g(t)}(dy) < \infty.
  \end{equation}
  Then as long as $\Ent(t)$ is finite its first derivative is given by
  \begin{equation}\label{1st derivative of entropy}
    \Ent'(t) = \int_M \frac{|\nabla u|^2}{u} \left(t, y \right)\, p(t,x,y) \vol_{g(t)}(dy),
  \end{equation}
  and its second derivative by
  \begin{align}
    \Ent''(t) = \int_M \left(2u\left(|\nabla\nabla\log u|^2 + \left(
          \Ric - \frac{1}{2} \frac{\partial g}{\partial
            t} \right)
        (\nabla \log u, \nabla \log u) \right) \right) \left( t, y \right)&\notag \\
    p( t, x, y)\,\vol_{g(t)}(dy).&
    \label{2nd derivative of entropy}
  \end{align}
\end{theorem}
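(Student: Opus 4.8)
The plan is to reduce everything to a single differentiation lemma and then apply it twice. The key observation is that $p(t,x,y)\vol_{g(t)}(dy)$ is the law of $X_t$, that $p$ solves the adjoint heat equation $\partial_t p = \Delta_{g(t)} p - \frac12\tr(\partial g/\partial t)\,p$, and that the volume form satisfies $\partial_t \vol_{g(t)} = \frac12\tr(\partial g/\partial t)\vol_{g(t)}$. For a smooth space-time function $\phi$, differentiating $\E[\phi(t,X_t)] = \int_M \phi\, p\,\vol_{g(t)}$ and inserting the equation for $p$, the two terms carrying $\tr(\partial g/\partial t)$ cancel, leaving $\int_M\big[(\partial_t\phi)\,p + \phi\,\Delta_{g(t)} p\big]\vol_{g(t)}$. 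Integrating the second term by parts (legitimate because the stated integrability hypotheses put the relevant gradients in $L^2(p\,\vol_{g(t)})$, so no flux survives at infinity) yields the commutation formula
\[
  \frac{d}{dt}\,\E\big[\phi(t,X_t)\big] = \E\big[(\partial_t\phi + \Delta_{g(t)}\phi)(t,X_t)\big].
\]

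First I would apply this with $\phi = u\log u$. Since $\nabla(u\log u) = (1+\log u)\nabla u$ and $\Delta(u\log u) = (1+\log u)\Delta u + |\nabla u|^2/u$, while the backward heat equation~\eqref{backwardheat} gives $\partial_t(u\log u) = -(1+\log u)\Delta u$, the $(1+\log u)\Delta u$ terms cancel and one is left with $\partial_t(u\log u) + \Delta(u\log u) = |\nabla u|^2/u$. Here \eqref{cond1} is exactly the hypothesis needed to differentiate under the integral sign and to discard the boundary term, and~\eqref{1st derivative of entropy} follows at once.

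For the second derivative I would apply the commutation formula again, now to $\phi = |\nabla u|^2/u$, with \eqref{cond2} playing the same analytic role. The cleanest route is to set $v := \log u$, so that $\phi = u\,|\nabla v|^2$ and $v$ solves $\partial_t v = -\Delta v - |\nabla v|^2$. A direct Leibniz expansion gives $\partial_t\phi + \Delta\phi = u\big(\partial_t w + \Delta w + 2\langle\nabla v,\nabla w\rangle\big)$ with $w := |\nabla v|^2$. Computing $\partial_t w$ one must differentiate the metric inside $|\nabla v|^2$, which produces the term $-(\partial g/\partial t)(\nabla v,\nabla v)$ together with $2\langle\nabla v,\nabla(\partial_t v)\rangle$; substituting $\partial_t v = -\Delta v - w$ and then Bochner's formula $\Delta w = 2|\nabla\nabla v|^2 + 2\langle\nabla v,\nabla\Delta v\rangle + 2\Ric(\nabla v,\nabla v)$ makes both the $\langle\nabla v,\nabla\Delta v\rangle$ and the $\langle\nabla v,\nabla w\rangle$ contributions cancel, leaving $2|\nabla\nabla v|^2 + 2\big(\Ric - \frac12\,\partial g/\partial t\big)(\nabla v,\nabla v)$. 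As $v = \log u$, this is precisely the integrand of~\eqref{2nd derivative of entropy}.

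The Leibniz and Bochner manipulations are routine; the genuine obstacle is the analytic justification, namely that \eqref{cond1} and \eqref{cond2} really do license both the differentiation under the integral and the vanishing of the boundary terms in the two integration-by-parts steps on a possibly noncompact, time-dependent manifold. I would handle this by a standard exhaustion: integrate over geodesic balls, bound the boundary flux by Cauchy--Schwarz against the finite $L^2(p\,\vol_{g(t)})$-norms supplied by the hypotheses, and let the radius tend to infinity.
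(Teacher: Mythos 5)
Your algebraic reductions are correct and reproduce the content of the paper's Lemma~\ref{derivatives} (the identities $(\partial_t+\Delta)(u\log u)=|\nabla u|^2/u$ and the Bochner computation for $|\nabla u|^2/u$, including the extra term $-\frac{\partial g}{\partial t}(\nabla\log u,\nabla\log u)$ coming from differentiating the metric inside $|\nabla v|^2$). But your overall route is the integration-by-parts argument that the paper explicitly declines to use, on the grounds that on a noncompact $M$ its justification is the hard part; the paper instead applies It\^o's formula to $(u\log u)(s,X_s)$ and to $\frac{|\nabla u|^2}{u}(s,X_s)$ along the $(g(t))$-Brownian motion, where \eqref{cond1} and \eqref{cond2} are precisely the square-integrability conditions guaranteeing that the local martingale parts are true martingales, so that taking expectations yields \eqref{1st derivative of entropy} and \eqref{2nd derivative of entropy} with no boundary terms ever arising.

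The gap in your version is concrete: exhausting $M$ by balls $B_R$ and integrating $\int_{B_R}\phi\,\Delta p$ by parts twice produces the two flux terms
\begin{equation*}
  \int_{\partial B_R} p\,\partial_\nu\phi \qquad\text{and}\qquad \int_{\partial B_R}\phi\,\partial_\nu p .
\end{equation*}
Cauchy--Schwarz against $\int_M|\nabla\phi|^2\,p\,\vol_{g(t)}(dy)<\infty$ controls only the first of these, and even then only along a subsequence of radii via the coarea formula. The second flux involves $\nabla p$ and the size of $\phi$ itself, about which \eqref{cond1} and \eqref{cond2} say nothing: they bound $\nabla\phi$ in $L^2(p\,\vol_{g(t)})$, not $\phi\,|\nabla p|$ on spheres. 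The theorem assumes no completeness, no curvature bounds and no Gaussian gradient estimates for $p$, so this term cannot be discarded from the stated hypotheses; likewise the interchange of $d/dt$ with $\int_M$ is asserted rather than proved. To close the argument as you propose you would need additional heat-kernel gradient bounds and growth control on $u\log u$ and $|\nabla u|^2/u$; alternatively, switch to the probabilistic formulation, where the hypotheses are tailored to do exactly the work required.
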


For the proof we need the following lemma:

\begin{lemma}\label{derivatives}
  If $u$ solves the backward heat equation~\eqref{backwardheat}, we
  have
  \begin{equation}\label{ulogu1}
    \left( \frac{\partial}{\partial t} + \Delta_{g(t)} \right) (u \log u) = \frac{|\nabla u|^2}{u}
  \end{equation}
  and
  \begin{equation}\label{ulogu2}
    \left( \frac{\partial}{\partial t} + \Delta_{g(t)} \right) \left( \frac{|\nabla u|^2}{u} \right) = u \left( 2 |\nabla \nabla \log u|^2+ \left( 2 \Ric - \frac{\partial g}{\partial t} \right) (\nabla\log u, \nabla \log u) \right).
  \end{equation}
\end{lemma}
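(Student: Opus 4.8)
The plan is to prove Lemma~\ref{derivatives} by direct computation, treating the two identities separately since the second will build on intermediate quantities from the first. The essential input is the backward heat equation $\partial_t u = -\Delta u$, together with the standard formulas for how the Laplacian and the time-dependent metric interact with gradients. I would keep in mind throughout that because $g$ depends on $t$, differentiating $|\nabla u|^2 = g^{ij}\partial_i u\,\partial_j u$ in time produces an extra term $(\partial_t g^{ij})\partial_i u\,\partial_j u = -(\partial_t g)(\nabla u,\nabla u)$ coming from the evolving inverse metric; this is the term that ultimately carries the $\partial g/\partial t$ contribution into~\eqref{ulogu2}, so I want to track it carefully rather than let it be absorbed silently.

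For the first identity~\eqref{ulogu1}, I would expand both operators on $u\log u$. For the time derivative, $\partial_t(u\log u) = (\partial_t u)(\log u + 1)$. For the spatial part, since the metric-time-dependence does not enter a purely spatial Laplacian applied at fixed $t$, I use the chain rule $\Delta(u\log u) = (\log u + 1)\Delta u + \frac{|\nabla u|^2}{u}$, the last term arising from $\nabla(\log u + 1)\cdot\nabla u = \frac{|\nabla u|^2}{u}$. Adding the two lines, the terms proportional to $(\log u + 1)$ combine into $(\log u+1)(\partial_t u + \Delta u) = 0$ by~\eqref{backwardheat}, leaving exactly $\frac{|\nabla u|^2}{u}$. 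This first identity is routine.

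The second identity~\eqref{ulogu2} is where the real work lies, and I expect it to be the main obstacle. The cleanest route is to write $w := \log u$, so that $\frac{|\nabla u|^2}{u} = u\,|\nabla w|^2$, and to compute $(\partial_t + \Delta)(u\,|\nabla w|^2)$ using the product rule together with~\eqref{ulogu1} (which already tells me how the operator acts on factors built from $u$). The key ingredient is the Bochner--Weitzenb\"ock formula $\Delta|\nabla w|^2 = 2|\nabla\nabla w|^2 + 2\langle\nabla w,\nabla\Delta w\rangle + 2\Ric(\nabla w,\nabla w)$, which supplies both the Hessian term $2|\nabla\nabla w|^2$ and the $2\Ric$ term appearing in the final formula. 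The remaining first-order cross terms from the product rule, $2\langle\nabla u\cdot\nabla|\nabla w|^2\rangle$ and the Laplacian of $w$ coming through $\Delta u$ and $\partial_t u$, must be shown to cancel after using the evolution equation for $w$, namely $\partial_t w + \Delta w + |\nabla w|^2 = 0$, obtained by dividing~\eqref{backwardheat} by $u$. The one genuinely delicate bookkeeping point is the time derivative of $|\nabla w|^2$: because of the evolving inverse metric noted above, $\partial_t|\nabla w|^2 = 2\langle\nabla w,\nabla\partial_t w\rangle - (\partial_t g)(\nabla w,\nabla w)$, and it is precisely this $-(\partial_t g)(\nabla w,\nabla w)$ that combines with the $2\Ric(\nabla w,\nabla w)$ from Bochner to yield the stated coefficient $\bigl(2\Ric - \tfrac{\partial g}{\partial t}\bigr)(\nabla\log u,\nabla\log u)$.

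Assembling these pieces, I would organize the computation so that all terms involving $\Delta w$ or $\partial_t w$ that are \emph{not} of the final form are collected and shown to vanish by repeated use of the evolution equation for $w$, leaving only the Hessian term, the curvature term, and the metric-evolution term. The main risk is sign and factor errors in combining the Bochner formula with the product-rule cross terms and the time-derivative term, so I would verify the final coefficients by checking the static, fixed-metric case ($\partial g/\partial t = 0$) against the classical identity for $(\partial_t + \Delta)\bigl(|\nabla u|^2/u\bigr)$ under the heat equation, and by a dimensional/scaling sanity check on the $\frac{\partial g}{\partial t}$ term.
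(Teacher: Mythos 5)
Your proposal is correct and follows essentially the same route as the paper: the first identity by direct expansion using the backward heat equation, and the second via the classical fixed-metric Bochner computation (which the paper simply cites from Hamilton and Lim--Luo) together with the extra term $-\frac{\partial g}{\partial t}(\nabla\log u,\nabla\log u)$ arising from the time-dependence of the inverse metric in $|\nabla\cdot|^2$ --- exactly the point the paper's proof singles out. Your cancellation of the cross terms using $\partial_t w+\Delta w+|\nabla w|^2=0$ and $\nabla u=u\nabla w$ checks out, so the fully written-out computation would be a valid (more detailed) version of the paper's argument.
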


\begin{proof}
  The first equality is straight-forward. The second one is well-known
  in the case of a fixed Riemannian metric (e.g.~\cite{Hamilton}; for
  a proof see \cite[Proposition~2.1]{L-L}). The additional term
  $-\frac{\partial g}{\partial t}(\nabla\log u, \nabla \log u)$
  appearing here comes from the time-derivative of $|\nabla u|^2$ via
  the formula
  \begin{equation}
    \label{MinusSign}
    \frac{\partial}{\partial t} \left( |\nabla f|^2 \right) =  -\frac{\partial g}{\partial t}(\nabla f, \nabla f),\quad
    f\in C^\infty(M).
  \end{equation}
  Note that not only $|\cdot|$, but also $\nabla$ depends on $t$,
  which is the reason for the minus sign in formula~\eqref{MinusSign}.
\end{proof}

Using Lemma~\ref{derivatives} it is easy to give a formal proof of
Theorem~\ref{derivatives of entropy} via integration by parts.
However, since $M$ is not assumed to be compact, the feasibility of
integration by parts is difficult to justify, and therefore we present
a proof based on stochastic analysis. In this proof the
assumptions~\eqref{cond1} and \eqref{cond2} are used to show that
certain local martingales are indeed true martingales.  One should
note that thanks to the exponential decay of the heat kernel (see
e.g.\ \cite{chengliyau} for the case of a fixed metric and
\cite[Section~6.5]{zhang} for the case of Ricci flow) \eqref{cond1}
and \eqref{cond2} are satisfied in most cases of interest.

\begin{remark}
  In terms of a $(g(t))_{t \geq0}$-Brownian motion $(X_t)_{t \geq 0}$
  started at $x$, conditions \eqref{cond1} and \eqref{cond2} read as
  \begin{align}
    &\E\left[|\nabla(u \log u)|^2(t,X_t)\right] < \infty,\label{cond1a}\\
    &\E\left[\left| \nabla \left( \frac{|\nabla u|^2}{u} \right)
      \right|^2(t,X_t)\right] < \infty,
    \label{cond2a}
  \end{align}
  and imply that
  \begin{align*}
    \int_0^t \E\left[|\nabla(u \log u)|^2(s,X_s)\right]ds < \infty\
    \text{ and }\ \int_0^t \E\left[\left| \nabla \left( \frac{|\nabla
            u|^2}{u} \right) \right|^2(s,X_s)\right]ds < \infty,
  \end{align*}
  which are standard conditions to assure that the martingale parts of
  the processes
  \begin{align*}
    (u \log u)(s,X_s),\quad \frac{|\nabla u|^2}{u} (s,X_s), \quad
    0\leq s\leq t,
  \end{align*}
  are true martingales (even $L^2$-martingales).  The condition,
  analogous to \eqref{cond1a} resp. \eqref{cond2a}, guaranteeing that
  the local martingale
  \begin{align*}
    u(s,X_s),\quad 0\leq s\leq t,
  \end{align*}
  is a true $L^2$-martingale reads as
  \begin{align}
    &\E\left[|\nabla u |^2(t,X_t)\right] < \infty.\label{cond0a}
  \end{align}
\end{remark}

\begin{proof}[Proof (of Theorem~\ref{derivatives of entropy})]
  Let $U$ be a horizontal lift of the $(g(t)_{t \geq 0}$-Brownian
  motion $X$ and $Z$ the corresponding anti-development of $X$ ($Z$ is
  an $\R^d$-valued Brownian motion speeded up by the factor
  $\sqrt{2}$).  By It\^o's formula (see
  \cite[Lemma~1]{kuwadaphilipowski})
  \begin{equation}\label{ito1}
    d(u \log u)(t, X_t) = \left( \frac{\partial}{\partial t} + \Delta_{g(t)} \right) (u \log u)(t, X_t) dt + dM_t,
  \end{equation}
  where thanks to \eqref{cond1} the local martingale
  \begin{equation*}
    M_t := \sum_{i=1}^d (d(u \log u))(s,X_s)(U_s e_i) dZ_s^i
  \end{equation*}
  is a true martingale (as stochastic integral of a square-integrable
  process, see e.g.\ \cite[Definition~3.2.9]{karatzas}). Combining
  \eqref{ito1} and \eqref{ulogu1} we obtain
  \begin{equation*}
    \E \left[ (u \log u)(t, X_t) \right] = (u \log u)(0, x) + \E \left[ \int_0^t \frac{|\nabla u|^2}{u}(s, X_s) ds \right]
  \end{equation*}
  or, in other words,
  \begin{equation*}
    \Ent(t) = \Ent(0) + \int_0^t \E \left[ \frac{|\nabla u|^2}{u}(s, X_s) \right] ds,
  \end{equation*}
  and hence
  \begin{equation*}
    \Ent'(t) =  \E \left[ \frac{|\nabla u|^2}{u}(t, X_t) \right] = \int_M \frac{|\nabla u(t,y)|^2}{u(t, y)} p(t,x,y) \vol_{g(t)}(dy),
  \end{equation*}
  as claimed.

  The formula for the second derivative of $\Ent$ can be proved in the
  same way, using \eqref{cond2} and~\eqref{ulogu2}.
\end{proof}

\section{Gradient-entropy estimates}\label{Section:Gradient-entropy
  estimates}

In this section we give gradient estimates for positive solutions of
the backward heat equation \eqref{backwardheat} in terms of their
entropy.

\begin{proposition}\label{entropy estimate}
  Let $u: {[0,T]}\times M \to \R_+$ be a positive solution of the
  backward heat equation \eqref{backwardheat} satisfying the
  conditions \eqref{cond1a}, \eqref{cond2a} and \eqref{cond0a} for
  $t=T$.  Assume that $\frac{\partial g}{\partial t} \leq 2 \Ric$.
  Then, for each $t\in{]0,T]}$ and $x\in M$,
  \begin{align}
    t\,\left|\frac{\nabla u}{u}\right|^2
    (0,x)\leq\E\left[\frac{u(t,X_t)}{u(0,x)}\log\frac{u(t,X_t)}{u(0,x)}\right]
  \end{align}
  where $(X_s)$ is a $(g(s)_{s\geq0}$-Brownian motion starting at
  $x$. In other words, if $u$ is normalized such that $u(0,x)=1$, then
  \begin{align}
    |\nabla u|^2 (0,x)\leq\frac{\Ent(t)}t.
  \end{align}
\end{proposition}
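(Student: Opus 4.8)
The plan is to deduce the estimate directly from the convexity of the entropy furnished by Theorem~\ref{derivatives of entropy}, combined with a normalization. First I would reduce to the case $u(0,x)=1$. Since the backward heat equation~\eqref{backwardheat} is linear, $v := u/u(0,x)$ is again a positive solution with $v(0,x)=1$. Before applying Theorem~\ref{derivatives of entropy} to $v$ I would check that $v$ still satisfies its integrability hypotheses. Writing $v\log v = \frac{u}{u(0,x)}\log\frac{u}{u(0,x)}$, the gradient $\nabla(v\log v)$ picks up an extra term proportional to $\nabla u$, coming from the additive constant $\log u(0,x)$; consequently controlling $\E\bigl[|\nabla(v\log v)|^2(t,X_t)\bigr]$ requires \emph{both} \eqref{cond1a} and \eqref{cond0a}, while \eqref{cond2a} is invariant under this scaling up to a constant factor. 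This is precisely the role of the extra hypothesis \eqref{cond0a}. Once $v$ is seen to satisfy \eqref{cond1a} and \eqref{cond2a}, Theorem~\ref{derivatives of entropy} applies to the entropy $\Ent_v$ of $v$.

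Next I would record two facts at $t=0$. Since $X_0=x$ and $v(0,x)=1$, the formula~\eqref{entropy} gives $\Ent_v(0) = (v\log v)(0,x) = 0$, and~\eqref{1st derivative of entropy} gives $\Ent_v'(0) = \frac{|\nabla v|^2}{v}(0,x) = |\nabla v|^2(0,x)$. The key structural input is the second-variation formula~\eqref{2nd derivative of entropy}: under the hypothesis $\frac{\partial g}{\partial t}\leq 2\Ric$ the tensor $\Ric - \frac12\frac{\partial g}{\partial t}$ is nonnegative, and since $|\nabla\nabla\log v|^2\geq 0$, the integrand in~\eqref{2nd derivative of entropy} is nonnegative. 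Hence $\Ent_v''\geq 0$, so $\Ent_v$ is convex and $\Ent_v'$ is non-decreasing on $[0,T]$.

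The conclusion then follows by integration. Using $\Ent_v'(s)\geq \Ent_v'(0)$ for all $s\in[0,t]$, I obtain
\begin{equation*}
  \Ent_v(t) = \Ent_v(0) + \int_0^t \Ent_v'(s)\,ds \geq t\,\Ent_v'(0) = t\,|\nabla v|^2(0,x).
\end{equation*}
Unwinding the normalization, $|\nabla v|^2(0,x) = \left|\frac{\nabla u}{u}\right|^2(0,x)$ and $\Ent_v(t) = \E\bigl[\frac{u(t,X_t)}{u(0,x)}\log\frac{u(t,X_t)}{u(0,x)}\bigr]$, which yields the claimed inequality; the displayed normalized form $|\nabla u|^2(0,x)\leq \Ent(t)/t$ is the special case $u(0,x)=1$.

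The main obstacle I anticipate is not the convexity argument itself, which is short, but the bookkeeping needed to verify that the rescaled solution $v$ satisfies the integrability hypotheses of Theorem~\ref{derivatives of entropy}, and in particular making transparent why \eqref{cond0a} must be adjoined to \eqref{cond1a} and \eqref{cond2a}. A secondary technical point is justifying the fundamental-theorem-of-calculus step, namely that the representations $\Ent_v(t)=\Ent_v(0)+\int_0^t\Ent_v'(s)\,ds$ and $\Ent_v'(t)=\Ent_v'(0)+\int_0^t\Ent_v''(s)\,ds$ remain valid up to and including the endpoint $t=0$, where the derivatives are read as right derivatives; this is exactly what the martingale argument underlying the proof of Theorem~\ref{derivatives of entropy} provides.
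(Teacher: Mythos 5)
Your argument is correct, and it proves the same inequality as the paper via a different packaging of the same underlying positivity. The paper does not normalize: it introduces the single process $N_s=(t-s)\,\frac{|\nabla u|^2}{u}(s,X_s)+(u\log u)(s,X_s)$, notes that under $\frac{\partial g}{\partial t}\leq 2\Ric$ its drift is $(t-s)$ times the nonnegative integrand of \eqref{2nd derivative of entropy} (so $N$ is a submartingale), writes $\E[N_0]\leq\E[N_t]$, and then divides by $u(0,x)$ using the identity $u(0,x)=\E[u(t,X_t)]$ --- which is precisely where \eqref{cond0a} enters the paper's proof, as it makes $u(s,X_s)$ a true martingale. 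You instead normalize $v=u/u(0,x)$ and run Theorem~\ref{derivatives of entropy} on $v$: $\Ent_v(0)=0$, $\Ent_v'$ nondecreasing by convexity, hence $\Ent_v(t)\geq t\,\Ent_v'(0)=t\,|\nabla v|^2(0,x)$. The two computations coincide at bottom, since $\E[N_t]-\E[N_0]=\int_0^t(t-s)\,\Ent''(s)\,ds=\Ent(t)-\Ent(0)-t\,\Ent'(0)$. Your diagnosis of the role of \eqref{cond0a} differs from the paper's but is equally legitimate: you need it so that $v$ still satisfies \eqref{cond1a}, because $\nabla(v\log v)=\frac{1}{u(0,x)}\nabla(u\log u)-\frac{\log u(0,x)}{u(0,x)}\nabla u$ picks up a $\nabla u$ term, whereas the paper needs it for $u(0,x)=\E[u(t,X_t)]$. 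What your route buys is that the estimate appears as a pure corollary of the already-established convexity, with no new process to introduce; what the paper's route buys is that the submartingale $N$ sidesteps any differentiation at the endpoint $s=0$ (only $\E[N_0]\leq\E[N_t]$ is used) and localizes naturally by stopping times, which is the mechanism exploited in Section~5. Your endpoint concern is resolved exactly as you indicate: the martingale proof of Theorem~\ref{derivatives of entropy} delivers the integral representations from $s=0$ onward, so $\Ent_v'(s)\geq\frac{|\nabla v|^2}{v}(0,x)$ for all $s\in[0,t]$, which is all the final integration requires.
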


\begin{proof}
  Consider the process
  \begin{equation}
    \label{N_t}
    N_s:=(t-s)\,\frac{|\nabla u|^2}{u} (s,X_s)  
    +\big(u\log u\big)(s,X_s),\quad 0\leq s\leq t,
  \end{equation}
  which is easily seen to be a submartingale under the given
  conditions.  This enables us to exploit the inequality
  $\E[N_0]\leq\E[N_t]$ which gives
  \begin{equation}
    \label{grad_entropy_estimate}
    t\,\frac{|\nabla u|^2}{u} (0,x)+\big(u\log u\big)(0,x)\leq \E\left[\big(u\log u\big)(t,X_t)\right].
  \end{equation}
  Combining this with the fact that $u(0,x)=\E[u(t,X_t)]$ which
  follows from the martingale property of $(u(s,X_s))_{0\leq s\leq
    t}$, the claimed inequality is obtained.
\end{proof}

\begin{corollary}
  Let $u$ be a positive solution of the backward heat equation
  \eqref{backwardheat} on ${[0,T]}\times M$.  We keep the assumptions
  of Theorem \textup{\ref{entropy estimate}}. Let $x\in M$ and
  $0<t\leq T$.
  \begin{enumerate}
  \item Then, for any $\delta>0$,
    \begin{align*}
      \left|\frac{\nabla u}{u}\right|^2 (0,x)\leq\frac\delta{2t}+
      \frac1{2\delta}\,\E\left[\frac{u(t,X_t)}{u(0,x)}\log\frac{u(t,X_t)}{u(0,x)}\right]
    \end{align*}
  \item If $m:=\sup_{{[0,t]}\times M}u$, then
    \begin{equation*}
      \frac{\vert\nabla u\vert}{u}(0,x)\leq\frac1{t^{1/2}}\sqrt{\log\frac m{u(0,x)}}\,.
    \end{equation*}
  \end{enumerate}
\end{corollary}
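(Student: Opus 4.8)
The plan is to obtain both inequalities directly from the entropy estimate of Proposition~\ref{entropy estimate}, combined with two elementary inequalities. Write $A:=\left|\frac{\nabla u}{u}\right|^2(0,x)$ and $B:=\E\left[\frac{u(t,X_t)}{u(0,x)}\log\frac{u(t,X_t)}{u(0,x)}\right]$, so that Proposition~\ref{entropy estimate} reads $tA\le B$, while the two assertions are bounds on the \emph{linear} gradient quantity $\frac{|\nabla u|}{u}(0,x)=\sqrt{A}$. The natural first step is therefore to take square roots in the entropy estimate, which yields $\frac{|\nabla u|}{u}(0,x)\le\sqrt{B/t}$.

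For the first assertion I would insert a free parameter by the weighted arithmetic--geometric mean inequality $\sqrt{\alpha\beta}\le\tfrac12(\alpha+\beta)$. Splitting $\frac{B}{t}=\frac{\delta}{t}\cdot\frac{B}{\delta}$ for arbitrary $\delta>0$ gives
\begin{equation*}
  \frac{|\nabla u|}{u}(0,x)\le\sqrt{\frac{B}{t}}=\sqrt{\frac{\delta}{t}\cdot\frac{B}{\delta}}\le\frac{\delta}{2t}+\frac{B}{2\delta},
\end{equation*}
which is the claimed estimate; minimizing the right-hand side over $\delta$ (at $\delta=\sqrt{tB}$) recovers exactly the bound $\sqrt{B/t}$, so no sharpness is lost.

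For the second assertion I would use the pointwise bound on $u$. Put $Y:=\frac{u(t,X_t)}{u(0,x)}$ and $K:=\frac{m}{u(0,x)}$. The martingale property of $(u(s,X_s))_{0\le s\le t}$ employed in the proof of Proposition~\ref{entropy estimate} gives $\E[Y]=1$, and $m=\sup_{[0,t]\times M}u$ forces $0\le Y\le K$. Using the pointwise inequality $Y\log Y\le Y\log K$ (valid on $[0,K]$ since $\log Y\le\log K$ and $Y\ge0$, with $0\log0=0$) and taking expectations yields
\begin{equation*}
  B=\E[Y\log Y]\le\log K\cdot\E[Y]=\log\frac{m}{u(0,x)}.
\end{equation*}
Substituting $B\le\log(m/u(0,x))$ into $tA\le B$ and taking square roots gives $\frac{|\nabla u|}{u}(0,x)\le t^{-1/2}\sqrt{\log(m/u(0,x))}$.

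I expect the only point requiring care to be the last pointwise estimate. Since $Y\mapsto Y\log Y$ is not monotone and is negative on $(0,1)$, one cannot control $B$ by bounding $\log Y$ alone; it is the nonnegative weight $Y$ together with the normalization $\E[Y]=1$ that turns the crude bound $Y\le K$ into the sharp entropy bound $\log K$. All remaining steps are immediate consequences of Proposition~\ref{entropy estimate} and the arithmetic--geometric mean inequality.
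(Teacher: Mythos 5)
Your proof is correct, and since the paper states this corollary without any proof it is exactly the intended argument: part (1) is Proposition~\ref{entropy estimate} plus the arithmetic--geometric mean inequality $\sqrt{(\delta/t)(B/\delta)}\le\frac{\delta}{2t}+\frac{B}{2\delta}$, and part (2) is the entropy bound $\E[Y\log Y]\le\log K$ for a nonnegative random variable with $\E[Y]=1$ and $Y\le K$, where $\E[Y]=1$ is legitimate because condition \eqref{cond0a} makes $(u(s,X_s))$ a true martingale. Your remark that one needs the weight $Y$ together with the normalization, not just a bound on $\log Y$, is the right point of care.

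One thing you should make explicit rather than pass over: you have silently corrected the statement of part (1). As printed, the left-hand side carries an exponent $2$, and that version is false — for $u(t,y)=e^{2y-4t}$ on $\R$ with $x=0$ one has $\left|\nabla u/u\right|^2(0,0)=4$ while $B=4t$, so choosing $\delta=2t$ makes the right-hand side equal to $2$. Your argument proves the bound for the unsquared quantity $\frac{|\nabla u|}{u}(0,x)=\sqrt{A}$, which (consistently with part (2)) is the only version that can hold; minimizing over $\delta$ recovers $\sqrt{B/t}$, whereas the proposition only gives $A\le B/t$. So state that you are proving the corollary with $\left|\frac{\nabla u}{u}\right|(0,x)$ on the left in (1), and note the typo.
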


\section{Entropy and linear growth}\label{Section:Entropy and linear growth}
We now investigate positive solutions of the backward heat equation
\eqref{backwardheat} according to their entropy.  Recall that by
Remark~\textup{\ref{timereversal}\/} any global solution to the
backward equation \eqref{backwardheat} gives rise to an ancient
solution of the forward heat equation.

\begin{theorem}\label{entropy grows sublinearly}
  Let $u: \R_+ \times M \to \R_+$ be a positive solution of the
  backward heat equation \eqref{backwardheat} satisfying \eqref{cond1}
  and \eqref{cond2} for all $t > 0$.  If $\frac{\partial g}{\partial
    t} \leq 2 \Ric$ and if the entropy of $u$ grows sublinearly, i.e.\
  $\lim_{t\to\infty} \Ent(t)/t=0$, then $u$ is constant.
\end{theorem}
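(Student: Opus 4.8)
The plan is to leverage the two formulas in Theorem~\ref{derivatives of entropy} to show that $\Ent$ is a convex, non-decreasing function of $t$, and then to argue that sublinear growth of such a function forces its derivative to vanish identically; the vanishing of $\Ent'$ will then pin down $u$.

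First I would read off monotonicity and convexity from Theorem~\ref{derivatives of entropy}. In \eqref{1st derivative of entropy} the integrand $\frac{|\nabla u|^2}{u}\,p$ is nonnegative because $u>0$ and $p\geq0$, so $\Ent'\geq0$ and $\Ent$ is non-decreasing. In \eqref{2nd derivative of entropy} the standing hypothesis $\frac{\partial g}{\partial t}\leq 2\Ric$ makes the tensor $\Ric-\frac12\frac{\partial g}{\partial t}$ nonnegative, so the quadratic form it defines is nonnegative; together with $|\nabla\nabla\log u|^2\geq0$, $u>0$ and $p\geq0$ this gives $\Ent''\geq0$. Hence $\Ent'$ is itself non-decreasing.

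The core step is to combine these with the growth hypothesis, and I would argue by contradiction. If $\Ent'(t_0)>0$ for some $t_0>0$, then monotonicity of $\Ent'$ gives $\Ent'(s)\geq\Ent'(t_0)$ for all $s\geq t_0$, whence
\begin{equation*}
  \Ent(t)\;\geq\;\Ent(t_0)+\Ent'(t_0)\,(t-t_0)\qquad(t\geq t_0);
\end{equation*}
dividing by $t$ and letting $t\to\infty$ yields $\lim_{t\to\infty}\Ent(t)/t\geq\Ent'(t_0)>0$, contradicting the assumed sublinear growth. Therefore $\Ent'(t)=0$ for every $t>0$.

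The final step, and the place where I expect the only genuine subtlety, is passing from the vanishing of $\Ent'$ to the constancy of $u$. For each fixed $t>0$, formula \eqref{1st derivative of entropy} now reads $\int_M \frac{|\nabla u|^2}{u}(t,y)\,p(t,x,y)\,\vol_{g(t)}(dy)=0$ with a nonnegative integrand. To conclude $\nabla u(t,\cdot)\equiv0$ on all of $M$ I would invoke the strict positivity of the heat-kernel density $p(t,x,\cdot)$ for $t>0$ --- the standard positivity property of the transition density of the non-degenerate diffusion $X$ --- so that $|\nabla u|^2(t,\cdot)$ vanishes almost everywhere, hence everywhere by continuity; connectedness of $M$ then makes $u(t,\cdot)$ spatially constant for every $t>0$. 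Once $u$ is spatially constant, $\Delta_{g(t)}u\equiv0$, and the backward heat equation~\eqref{backwardheat} forces $\frac{\partial u}{\partial t}=-\Delta_{g(t)}u=0$, so $u$ is constant in time as well, and thus constant. The main obstacle is really just the justification of heat-kernel positivity (and the implicit finiteness of $\Ent(t)$, which is built into the meaning of the growth hypothesis); everything else is elementary convex-analysis bookkeeping.
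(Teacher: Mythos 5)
Your proposal is correct and follows essentially the same route as the paper: convexity of $\Ent$ from \eqref{2nd derivative of entropy} under $\frac{\partial g}{\partial t}\leq 2\Ric$, plus sublinear growth, forces $\Ent'\equiv 0$, and then \eqref{1st derivative of entropy} together with strict positivity of the heat kernel gives $\nabla u\equiv 0$ and hence constancy. You merely spell out the convexity bookkeeping and the final positivity step in more detail than the paper does.
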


\begin{proof}
  Since $\Ent$ is convex, the condition
  $\lim_{t\to\infty}\frac{\Ent(t)}t = 0$ implies that $\Ent$ is
  constant. Therefore
  \begin{equation*}
    \Ent'(t) = \E \left[ \frac{|\nabla u|^2}{u} \left(t, X_t \right) \right] \equiv 0,
  \end{equation*} 
  so that $u$ is constant.
\end{proof}

\begin{remark} 
  Note that Theorem \ref{entropy grows sublinearly} also immediately
  follows from the results of the last section. Indeed, if the
  conditions \eqref{cond1} and \eqref{cond2}, or equivalently, the
  conditions \eqref{cond1a}, \eqref{cond2a} hold for all $t>0$, we
  have that \eqref{N_t} is a true submartingale. Rewriting
  $\E[N_0]\leq\E[N_t]$, resp.~$\E[N_{t/2}]\leq\E[N_t]$, we have
  \begin{align*}
    &\frac{|\nabla u|^2}{u} (0,x)+\frac1t\,\big(u\log u\big)(0,x)\leq
    \frac1t\,\E\left[\big(u\log u\big)(t,X_t)\right],\quad\text{resp.,}\\
    &\E\left[\frac{|\nabla u|^2}{u} (t/2,X_{t/2})\right]
    +\frac2t\,\E\left[\big(u\log u\big)(t/2,X_{t/2})\right]\leq
    \frac2t\,\E\left[\big(u\log u\big)(t,X_t)\right],
  \end{align*}
  and it suffices to take the limit as $t\to\infty$.
\end{remark}

\begin{remark} Let $\frac{\partial g}{\partial t} \leq 2 \Ric$. For a
  positive solution $u: \R_+ \times M \to \R_+$ of the backward heat
  equation \eqref{backwardheat} we may consider the constant
$$\theta:=\lim_{t\to\infty}\Ent'(t)$$
which is well-defined by the monotonicity resulting from formula
\eqref{2nd derivative of entropy}.  The value of $\theta$ may be zero,
a positive constant or $+\infty$.  Theorem \ref{entropy grows
  sublinearly} can then be rephrased to the statement that a positive
solution $u: \R_+ \times M \to \R_+$ of the backward heat equation
\eqref{backwardheat}, satisfying \eqref{cond1} and \eqref{cond2} for
all $t \geq 0$, is trivial if and only if $\theta=0$.
\end{remark}

\begin{theorem}\label{linear entropy means separation}
  Let $u: [0,T] \times M \to \R_+$ be a positive solution of the
  backward heat equation \eqref{backwardheat} satisfying \eqref{cond1}
  and \eqref{cond2} for $t=T$. If $\frac{\partial g}{\partial t} \leq
  2 \Ric$ and $\Ent(t)$ is an exactly linear function of $t$, then $u$
  has the form
  \begin{equation}\label{u seperation}
    u(t, y) = \psi(y)\phi(t) 
  \end{equation}
  for some functions $\psi$ and $\phi$. Moreover, $\psi$ and $\phi$
  satisfy the differential equation
  \begin{equation}\label{phi-psi equation}
    \frac{\partial \phi / \partial t}{\phi} = -\frac{\Delta \psi}{\psi}.
  \end{equation}
\end{theorem}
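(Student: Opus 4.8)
The plan is to read off the separation of variables directly from the second--variation formula \eqref{2nd derivative of entropy}. First I would observe that, since $\frac{\partial g}{\partial t}\le 2\Ric$, the symmetric tensor $\Ric-\frac12\frac{\partial g}{\partial t}$ is nonnegative, so the integrand in \eqref{2nd derivative of entropy},
$$2u\Big(|\nabla\nabla\log u|^2+\big(\Ric-\tfrac12\tfrac{\partial g}{\partial t}\big)(\nabla\log u,\nabla\log u)\Big)\,p,$$
is the product of the strictly positive weight $2u\,p$ (recall that $u>0$ and that the heat kernel $p(t,x,\newdot)$ is strictly positive) with a sum of two nonnegative terms. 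Because $\Ent(t)$ is exactly linear, $\Ent''\equiv 0$, so this integral vanishes; by continuity of the integrand together with the strict positivity of $u\,p$, the bracket must vanish identically. In particular
$$\nabla\nabla\log u(t,\cdot)\equiv 0,$$
i.e.\ the spatial Hessian of $\log u$ is zero at every point and every time. (The Ricci term vanishes as well, but this will not be needed.)

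Next I would convert the pointwise identity $\nabla\nabla\log u=0$ into the asserted product structure. Writing $f:=\log u$ and using that $\nabla$ is the Levi--Civita connection, the vanishing Hessian gives both $\Delta f=\tr\nabla\nabla f=0$ and $\nabla|\nabla f|^2=2\,(\nabla\nabla f)(\nabla f,\newdot)=0$; thus $|\nabla\log u|^2(t,\cdot)$ is spatially constant for each $t$, say equal to $c(t)$. On the other hand, differentiating $f=\log u$ in time and using $\frac{\partial u}{\partial t}=-\Delta u$ together with $\Delta u=u\,(\Delta f+|\nabla f|^2)$ yields
$$\frac{\partial}{\partial t}\log u=-\frac{\Delta u}{u}=-\big(\Delta f+|\nabla f|^2\big)=-c(t),$$
which is independent of the space variable. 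Integrating in $t$ shows that $\log u(t,y)-\log u(0,y)$ does not depend on $y$, i.e.\ $\log u(t,y)=\log\psi(y)+\log\phi(t)$ for suitable functions $\psi,\phi$, and therefore $u(t,y)=\psi(y)\phi(t)$ as claimed in \eqref{u seperation}. Finally, substituting this product into \eqref{backwardheat} and dividing by $u=\psi\phi$ gives $\phi'/\phi=-\Delta\psi/\psi$, which is exactly \eqref{phi-psi equation}.

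The step I expect to require the most care is ensuring that $\nabla\nabla\log u(t,\cdot)\equiv 0$ holds for \emph{all} $t$ in the interval and not merely for the single time $t=T$ appearing in the hypotheses, since the time--integration above uses the spatial constancy of $\frac{\partial}{\partial t}\log u$ on a whole interval. This is precisely where the \emph{exact} linearity of $\Ent$ is essential: it forces $\Ent''(t)=0$ at every interior $t$, and one must verify that the second--variation formula \eqref{2nd derivative of entropy} is valid at those times so that the vanishing of the integral can be invoked there, passing to the endpoints by continuity. A secondary point to handle carefully is the implication ``integral equals zero'' $\Rightarrow$ ``integrand vanishes pointwise'', which rests on the continuity of the integrand and the strict positivity of $u$ and of the heat kernel.
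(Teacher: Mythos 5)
Your proposal is correct and follows essentially the same route as the paper: exact linearity forces $\Ent''\equiv 0$, the nonnegativity of the integrand in \eqref{2nd derivative of entropy} together with the strict positivity of $u$ and the heat kernel gives $\nabla\nabla\log u\equiv 0$, from which the spatial constancy of $|\nabla\log u|^2$ and the identity $\partial_t\log u=-|\nabla\log u|^2$ yield the product form and then \eqref{phi-psi equation}. Your closing caveat about validity of the second-variation formula at all interior times is a reasonable point of care, but it does not change the argument, which matches the paper's.
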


\begin{proof}
  Since $\Ent(t)$ is exactly linear, we have
  \begin{align*}
    \Ent''(t) &= \E\left[2u\left(|\nabla\nabla\log u|^2+\left(
          \Ric
          - \frac{1}{2} \frac{\partial g}{\partial t} \right)(\nabla\log u, \nabla \log u)\right)(t,X_t)\right]\\
    &= \int_M 2u\left(|\nabla\nabla\log u|^2+\left( \Ric
        - \frac{1}{2} \frac{\partial g}{\partial t} \right)(\nabla\log
      u, \nabla \log u)\right)p dy \equiv 0.
  \end{align*}
  In particular, $\nabla\nabla\log u\equiv 0$ implies that
  \begin{equation*}
    \nabla \left( \left| \nabla \log u \right|^2 \right) = 2\nabla\nabla\log u(\nabla\log u,\cdot)=0
  \end{equation*}
  so that $|\nabla \log u|$ is a function of $t$ only. Since
  \begin{align*}
    0=\operatorname{tr}(\nabla\nabla\log u)=\Delta\log u =
    \frac{\Delta u}{u}-\frac{|\nabla u|^2}{u^2} = -\frac{\partial\log
      u}{\partial t} - |\nabla\log u|^2,
  \end{align*}
  we have
  \begin{equation*}
    \frac{\partial\log u}{\partial t} = -|\nabla\log u|^2
  \end{equation*}
  and hence
  \begin{equation*}
    \log u(t,y) - \log u(0, y) = -\int_0^t |\nabla \log u|^2(s)ds.
  \end{equation*}
  It follows that
  \begin{equation*}
    u(t, y) = u(0,y) \exp \left( -\int_0^t |\nabla\log u|^2(s)ds \right),
  \end{equation*}
  and this proves Eq.~\eqref{u seperation}.  Then Eq.~\eqref{phi-psi equation}
  follows immediately from \eqref{u seperation} and the backward heat
  equation for $u$.
\end{proof}

We also have the following simple observation:
\begin{proposition}\label{positivedefinite}
  If $2 \Ric - \frac{\partial g}{\partial t}$ is positive definite
  everywhere, then no nonconstant positive solution to the backward
  heat equation satisfying \eqref{cond1} and \eqref{cond2} can have
  linear entropy.
\end{proposition}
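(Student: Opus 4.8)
The plan is to argue by contradiction using the second-variation formula \eqref{2nd derivative of entropy} from Theorem~\ref{derivatives of entropy}, together with the strict positivity hypothesis. Suppose $u$ is a nonconstant positive solution, satisfying \eqref{cond1} and \eqref{cond2}, whose entropy is an exactly linear function of $t$. Then $\Ent''(t)$ vanishes identically, so \eqref{2nd derivative of entropy} gives
\begin{equation*}
  \int_M 2u\left(|\nabla\nabla\log u|^2 + \left(\Ric - \frac{1}{2}\frac{\partial g}{\partial t}\right)(\nabla\log u,\nabla\log u)\right)(t,y)\, p(t,x,y)\,\vol_{g(t)}(dy) = 0
\end{equation*}
for every $t$. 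This is the single identity from which everything will be extracted.

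First I would observe that, under the hypothesis that $2\Ric - \frac{\partial g}{\partial t}$ is positive definite (hence in particular positive semi-definite), the integrand is a sum of two nonnegative quantities: the squared Hilbert--Schmidt norm $|\nabla\nabla\log u|^2$ and the quadratic form $(\Ric - \frac{1}{2}\frac{\partial g}{\partial t})(\nabla\log u,\nabla\log u)$. Since $u>0$ the prefactor $2u$ is strictly positive, and since the heat kernel $p(t,x,\newdot)$ is strictly positive everywhere, a nonnegative continuous integrand whose integral against $p\,\vol_{g(t)}$ vanishes must vanish identically on $M$. Consequently, for each $t$,
\begin{equation*}
  |\nabla\nabla\log u|^2 \equiv 0 \quad\text{and}\quad \left(\Ric - \frac{1}{2}\frac{\partial g}{\partial t}\right)(\nabla\log u,\nabla\log u) \equiv 0.
\end{equation*}

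The decisive step is that positive \emph{definiteness}---a genuinely stronger assumption than the semi-definiteness used in Theorem~\ref{linear entropy means separation}---upgrades the vanishing of the quadratic form to the vanishing of its argument: if $(2\Ric - \frac{\partial g}{\partial t})(v,v)>0$ for every $v\neq 0$, then $(\Ric - \frac{1}{2}\frac{\partial g}{\partial t})(\nabla\log u,\nabla\log u)=0$ forces $\nabla\log u=0$, i.e.\ $\nabla u\equiv 0$. Thus $u$ is spatially constant at each time, so $\Delta_{g(t)}u=0$, and the backward heat equation~\eqref{backwardheat} then yields $\partial u/\partial t=0$. Hence $u$ is constant in both variables, contradicting the assumption that $u$ is nonconstant.

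I expect the only delicate point to be the passage from the vanishing integral to pointwise vanishing of the integrand, which rests on the strict positivity of the heat kernel and on smoothness (hence continuity) of the integrand; both are available in the present setting. Once this is in place the argument is immediate, and the role of the positive-definiteness hypothesis is exactly to exclude the nonconstant separated solutions $u(t,y)=\psi(y)\phi(t)$ that remain admissible in the merely semi-definite case treated in Theorem~\ref{linear entropy means separation}.
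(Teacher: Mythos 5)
Your argument is correct and coincides with the paper's own proof: both deduce from $\Ent''\equiv 0$ and the strict positivity of the heat kernel that the nonnegative integrand vanishes pointwise, then use positive definiteness of $2\Ric-\frac{\partial g}{\partial t}$ to conclude $\nabla\log u\equiv 0$. Your version merely spells out a bit more (the splitting into two nonnegative terms and the final step that spatial constancy plus the heat equation forces constancy in $t$), which the paper leaves implicit.
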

\begin{proof}
  If $\Ent(t)$ is linear, we have $\Ent''(t)\equiv0$ which implies
  that
  \begin{equation*}
    \left( 2 \Ric - \frac{\partial g}{\partial t} \right) (\nabla\log u,\nabla\log u) \equiv 0,
  \end{equation*}
  as the heat kernel $p(t, x, y)$ is strictly positive everywhere.
  Since $ 2 \Ric - \frac{\partial g}{\partial t}$ is strictly positive
  everywhere, we get $\nabla\log u \equiv 0$.
\end{proof}

\begin{example}
  Let $u(t, y)=e^{y-t}$ on $\R$ equipped with the standard
  metric. Choose $x=0$ in the heat kernel, so that
  $p(t,x,y)=e^{-y^2/4t}/\sqrt{4\pi t}$. We first check that the
  assumptions \eqref{cond1} and \eqref{cond2} are satisfied: By
  elementary and straightforward calculations, for every $t>0$ we have
  \[
  \int_{\mathbb R} |\nabla(u \log u)|^2(t,y) p(t,x,y) dy = (9t^2 + 8t
  + 1)e^{2t} < \infty
  \]
  and
  \[
  \int_{\mathbb R} \left| \nabla \left( \frac{|\nabla u|^2}{u} \right)
  \right|^2(t,y) p(t,x,y) dy = e^{2t} < \infty.
  \]
  An easy calculation shows that
  \begin{align*}
    \Ent (t)&=\frac 1{\sqrt{4\pi t}}\int_{\mathbb R}(-t+y)e^{-\frac {y^2}{4t}+y-t}dy\\
    &=\frac 1{\sqrt{4\pi t}}\int_{\mathbb
      R}\left((y-2t)+t\right)e^{-\frac {1}{4t}(y-2t)^2}dy=t,
  \end{align*}
  so that $\Ent$ grows exactly linearly.

  More generally, for any constants $a>0$ and $b$, the function $u(t,
  y)=a e^{by-b^2t}$ is a positive solution of the backward heat
  equation, and its entropy $\Ent(t)=a(\log a + b^2 t)$ grows exactly
  linearly. Similar examples can be constructed on $\R^n$.
\end{example}

\begin{example}
  Let $M=\R^3\setminus\{0\}$ be equipped with the standard metric. The
  function
  \begin{align*}
    u(x)=\frac1{\|x\|}
  \end{align*}
  is harmonic on $M$, and thus $u(t,x)\equiv u(x)$ provides trivially
  a stationary solution of the backward heat equation
  \eqref{backwardheat} with respect to the static Euclidean
  metric. Let $X$ be a Brownian motion on $\R^3$ (with generator
  $\Delta$) starting at $e_1=(1,0,0)$.  Since
  \begin{align*}
    \nabla u(x)=-\frac x{\|x\|^3}\quad\text{and}\quad\frac{|\nabla
      u|^2}u(x)=\frac1{\|x\|^3},
  \end{align*}
  it is easy to check that
  \begin{align*}
    \Ent(t) = \E \left[ (u \log u)(X_t) \right] = -\frac1{(4\pi
      t)^{3/2}}\int_{\R^3\setminus\{0\}}\frac{\log\|x\|}{\|x\|}\,\exp\left(-\frac{\|x-e_1\|^2}{4t}\right)
    dx,
  \end{align*}
  which is clearly bounded as function of $t$, whereas
  \begin{align*}
    \E \left[ \frac{|\nabla u|^2}{u}(X_t) \right] = \frac1{(4\pi
      t)^{3/2}}\int_{\R^3\setminus\{0\}}\frac1{\|x\|^3}\,\exp\left(-\frac{\|x-e_1\|^2}{4t}\right)
    dx=+\infty.
  \end{align*}
  This shows that Theorem \ref{derivatives of entropy} fails in
  general without assumptions on $u$.  Since the entropy $\Ent(t)$ of
  $u$ grows sublinearly, it also shows that Theorem \ref{entropy grows
    sublinearly} fails without extra assumptions, like the conditions
  \eqref{cond1} and \eqref{cond2}.
 
\end{example}

\section{Monotonicity and convexity of local entropy}
The results presented in the previous sections depend on the technical
conditions \eqref{cond1} and \eqref{cond2} which have been used to
assure that certain local martingales are true martingales.  As
indicated, due to the fast decay of the heat kernel on complete
non-compact manifolds, the required conditions are rather weak.  In
this section we describe some ideas how Stochastic Analysis can be
used to localize the entropy.
 
\begin{definition}
  For a relatively compact domain $D \subset M$ we define local
  entropies as follows:
  \begin{equation*}
    \Ent_D(t) := \E \left[ (u \log u)(t \wedge \tau_D, X_{t \wedge \tau_D}) \right] \quad 
    \mbox{and} \quad \Ent_D := \E \left[ (u \log u)(\tau_D, X_{\tau_D}) \right],
  \end{equation*}
  where $\tau_D$ is the first exit time of $X$ from $D$ (with the
  convention $\tau_D := 0$ if $X$ does not start in~$D$).
\end{definition}

Note that since $(X_t)_{t\geq0}$ is an elliptic diffusion and since
$D$ is relatively compact, the stopping time $\tau_D$ is finite almost
surely. By Fatou's lemma we trivially have
$$\Ent_D\leq\lim_{t\to\infty}\Ent_D(t).$$

\begin{remark}
  As before let $X$ be a $(g(t))_{t\geq0}$-Brownian motion $X$.
  It\^o's formula, along with Eq.~\eqref{ulogu1}, implies
  \begin{equation*}
    \E \left[ (u \log u)(t \wedge \tau_D, X_{t \wedge \tau_D}) \right] 
    = (u \log u)(0, x) + \E \left[ \int_0^{t \wedge \tau_D} \frac{|\nabla u|^2}{u}(s, X_s) ds \right],
  \end{equation*}
  in other words,
  \begin{equation}
    \label{Ent_D(t) monoton}
    \Ent_D(t) 
    = \Ent_D(0) + \int_0^t \E \left[ \frac{|\nabla u|^2}{u}(s, X_s) \cdot 1_{\{s \leq \tau_D\}} \right] ds,
  \end{equation}
  and in particular,
  \begin{equation*}
    \Ent_D'(t) =  \E \left[ \frac{|\nabla u|^2}{u}(t, X_t) \cdot 1_{\{t \leq \tau_D\}} \right] \geq 0. 
  \end{equation*}
\end{remark}

\begin{notation}
  Equation \eqref{Ent_D(t) monoton} shows that $\Ent_D(t)$ is monotone
  both as a function of $t$ and $D$.  We define
  \begin{equation}
    \label{entropy_M}
    \Ent_M(t) := \lim_{D\uparrow M}\Ent_D(t)\equiv \Ent_D(0) 
    + \int_0^t \E \left[ \frac{|\nabla u|^2}{u}(s, X_s) \right] ds.
  \end{equation}
  Note that, as long as $\Ent_M(t)$ is finite, we always have
  \begin{equation*}
    \Ent_M'(t) := \E \left[ \frac{|\nabla u|^2}{u}(t,X_t) \right]
    \equiv\lim_{D\uparrow M}\Ent_D'(t).
  \end{equation*}
\end{notation}

\begin{remark}\label{remark_trivial}
  With a similar argument we have
  \begin{align*}
    &\lim_{n \to \infty} \E \left[ \frac{|\nabla u|^2}{u}(t \wedge
      \tau_{D_n}, X_{t \wedge \tau_{D_n}}) \right]
    = \frac{|\nabla u|^2}{u}(0, x)\\
    &\quad+ \int_0^t \E \left[ \left( 2u |\nabla\nabla\log u|^2+ 2u
        \left( \Ric - \frac{1}{2} \frac{\partial
            g}{\partial t} \right) (\nabla \log u, \nabla \log u)
      \right) (s, X_s) \right] ds.
  \end{align*}
\end{remark}

\begin{theorem}
  \label{thm:exactly_linear}
  Suppose that $\frac{\partial g}{\partial t} \leq 2 \Ric$, and let
  $(D_n)_{n \in \N}$ be an increasing sequence of relatively compact
  domains in $M$ satisfying $\cup_{n \in \N} D_n = M$.  Let $u: \R_+
  \times M \to \R_+$ be a positive solution of the backward heat
  equation \eqref{backwardheat} such that
  \begin{equation}
    \label{Condition_Bounded}
    \sup_{[0,t]\times M} \frac{|\nabla u|^2}{u}\leq C_t
  \end{equation}
  for each $t$ with a constant $C_t$ depending on $t$.  Then if the
  entropy of $u$ is of sublinear growth, i.e.
  \begin{equation}
    \label{E_M_sublinear}
    \frac{\Ent_M(t)}t\to0,\quad\text{as $t\to\infty$,}
  \end{equation}
  then $u$ is constant.
\end{theorem}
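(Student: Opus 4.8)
The plan is to mirror the proof of Theorem~\ref{entropy grows sublinearly}: once I know that $\Ent_M$ is convex, the sublinear growth \eqref{E_M_sublinear} will force $\Ent_M'\equiv 0$, and the strict positivity of the heat kernel will then force $u$ to be constant. The entire difficulty is thus shifted to establishing the convexity of $\Ent_M$ \emph{without} the integrability conditions \eqref{cond1} and \eqref{cond2}; the pointwise bound \eqref{Condition_Bounded} is precisely what replaces them.

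First I would recall from the Notation box that $\Ent_M'(t)=\E\!\left[\frac{|\nabla u|^2}{u}(t,X_t)\right]\geq 0$, and from Remark~\ref{remark_trivial} that
\[
\lim_{n\to\infty}\E\left[\frac{|\nabla u|^2}{u}(t\wedge\tau_{D_n},X_{t\wedge\tau_{D_n}})\right]
=\frac{|\nabla u|^2}{u}(0,x)+\int_0^t\E\big[I(s,X_s)\big]\,ds,
\]
where, thanks to $\frac{\partial g}{\partial t}\leq 2\Ric$, the integrand $I:=2u|\nabla\nabla\log u|^2+2u\big(\Ric-\frac12\frac{\partial g}{\partial t}\big)(\nabla\log u,\nabla\log u)$ is nonnegative, so the right-hand side is non-decreasing in $t$. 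The key step is to identify this limit with $\Ent_M'(t)$. Since $(M,g(t))$ is complete and Brownian motion does not explode under $\frac{\partial g}{\partial t}\leq 2\Ric$ by~\cite{kuwadaphilipowski}, we have $\tau_{D_n}\to\infty$ almost surely as $D_n\uparrow M$, whence by continuity of paths $t\wedge\tau_{D_n}\to t$ and $X_{t\wedge\tau_{D_n}}\to X_t$ almost surely. Because the stopped integrand is bounded, $\frac{|\nabla u|^2}{u}(t\wedge\tau_{D_n},X_{t\wedge\tau_{D_n}})\leq C_t$ uniformly in $n$ by \eqref{Condition_Bounded}, bounded convergence yields
\[
\Ent_M'(t)=\E\left[\frac{|\nabla u|^2}{u}(t,X_t)\right]
=\frac{|\nabla u|^2}{u}(0,x)+\int_0^t\E\big[I(s,X_s)\big]\,ds,
\]
exhibiting $\Ent_M'$ as a non-decreasing function of $t$, so that $\Ent_M$ is convex.

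With convexity in hand I would conclude exactly as in Theorem~\ref{entropy grows sublinearly}. Since $\Ent_M'$ is non-decreasing and nonnegative, the limit $\theta:=\lim_{t\to\infty}\Ent_M'(t)$ exists in $[0,\infty]$; using $\Ent_M(t)=\Ent_M(0)+\int_0^t\Ent_M'(s)\,ds$ one sees that $\theta>0$ would force $\Ent_M(t)/t\to\theta>0$, contradicting \eqref{E_M_sublinear}, so $\theta=0$. A non-decreasing nonnegative function with limit $0$ vanishes identically, hence $\E\!\left[\frac{|\nabla u|^2}{u}(t,X_t)\right]\equiv 0$, and the strict positivity of $p(t,x,y)$ gives $\frac{|\nabla u|^2}{u}\equiv 0$, i.e.\ $\nabla u\equiv 0$; therefore $u$ is constant.

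The main obstacle I anticipate is the rigorous justification of the interchange of limit and expectation: I must verify that $\tau_{D_n}\to\infty$ almost surely (non-explosion, which is available from~\cite{kuwadaphilipowski} under $\frac{\partial g}{\partial t}\leq 2\Ric$) and that \eqref{Condition_Bounded} legitimately dominates the stopped integrand uniformly in $n$. Together with the continuity of $\frac{|\nabla u|^2}{u}$ and of the paths of $X$, this delivers the almost-sure convergence that makes bounded convergence applicable and thereby secures the convexity of $\Ent_M$, which is the crux of the argument.
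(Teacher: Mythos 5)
Your proposal is correct and takes essentially the same route as the paper: the paper deduces the monotonicity of $t\mapsto\E\bigl[\tfrac{|\nabla u|^2}{u}(t,X_t)\bigr]$ from the fact that under \eqref{Condition_Bounded} the bounded local submartingale $\tfrac{|\nabla u|^2}{u}(t,X_t)$ is a true submartingale, which is exactly the non-explosion plus bounded-convergence argument you spell out via Remark~\ref{remark_trivial}, and then concludes from sublinearity that this expectation vanishes identically. The only (trivial) point you gloss over is that $\nabla u\equiv 0$ gives constancy in space, after which the backward heat equation forces $\partial u/\partial t=0$ and hence constancy in time as well.
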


\begin{proof}
  Under condition \eqref{Condition_Bounded} the local submartingale
  \begin{align}
    \label{grad_u_submartingale}
    \frac{|\nabla u|^2}{u} (t,X_t), \quad t\geq0,
  \end{align}
  is bounded on compact time intervals, and hence is a true
  submartingale.  In particular, the expectations
  \begin{align*}
    t\to\E\left[\frac{|\nabla u|^2}{u} (t,X_t)\right]
  \end{align*}
  are non-decreasing. On the other hand, the condition
  \begin{equation*}
    \frac{\Ent_M(t)}t\equiv\frac{\Ent_M(0)}t+\frac1t\int_0^t \E\left[\frac{|\nabla u|^2}{u}(s, X_s) \right] ds
    \to0,\quad\text{as $t\to\infty$,}
  \end{equation*}
  implies that
  \begin{align*}
    \E\left[\frac{|\nabla u|^2}{u} (t_n,X_{t_n})\right]\to0
  \end{align*}
  for a sequence $t_n\uparrow\infty$. Hence,
$$\E\left[\frac{|\nabla u|^2}{u} (t,X_{t})\right]\equiv0$$
and consequently, $\nabla u(t,\newdot) \equiv 0$ for all $t$, so that
$u$ is constant in space. Since $u$ solves the backward heat equation,
this implies $\partial u/\partial t=0$ so that $u$ is constant in
space and time.
\end{proof}

\begin{theorem}
  Let $(D_n)_{n \in \N}$ be an increasing sequence of relatively
  compact domains in $M$ satisfying $\cup_{n \in \N} D_n = M$. Let $u:
  [0,T] \times M \to \R_+$ be a positive solution of the backward heat
  equation \eqref{backwardheat} such that
  \begin{equation}
    \label{Condition_Bounded1}
    \sup_{[0,T]\times M} \frac{|\nabla u|^2}{u}\leq C_T.
  \end{equation}
  Suppose that $t\mapsto\Ent_M(t)$ is a linear function on $[0,T]$.
  \begin{enumerate}
  \item If $\frac{\partial g}{\partial t} \leq 2 \Ric$, then $u$ has
    the form
    \begin{equation*}
      u(t, y) = \psi(y)\phi(t) 
    \end{equation*}
    for some functions $\psi$ and $\phi$. Moreover, $\psi$ and $\phi$
    satisfy the differential equation
    \begin{equation*}
      \frac{\partial \phi / \partial t}{\phi} = -\frac{\Delta \psi}{\psi}.
    \end{equation*}
  \item If $2 \Ric - \frac{\partial g}{\partial t}$ is positive
    definite everywhere, then $u$ is constant.
  \end{enumerate}
\end{theorem}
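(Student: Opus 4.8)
The plan is to run the arguments of Theorem~\ref{linear entropy means separation} and Proposition~\ref{positivedefinite}, but to replace the global integrability hypotheses \eqref{cond1}--\eqref{cond2} (and the second-variation formula \eqref{2nd derivative of entropy} they justify) by the uniform gradient bound \eqref{Condition_Bounded1}, extracting the decisive rigidity from the localized identity in Remark~\ref{remark_trivial}. First I would note, exactly as in the proof of Theorem~\ref{thm:exactly_linear}, that under \eqref{Condition_Bounded1} the local submartingale \eqref{grad_u_submartingale} is bounded by $C_T$ on the compact time interval $[0,T]$ and is therefore a genuine submartingale. Since $D_n \uparrow M$ and $X$ does not explode, for each $\omega$ one has $\tau_{D_n}(\omega) \to \infty$, so $t \wedge \tau_{D_n} = t$ and $X_{t \wedge \tau_{D_n}} = X_t$ for $n$ large; the integrand staying bounded by $C_T$, bounded convergence lets me pass to the limit on the left-hand side of the identity in Remark~\ref{remark_trivial} and identify $\lim_n \E\bigl[\frac{|\nabla u|^2}{u}(t \wedge \tau_{D_n}, X_{t \wedge \tau_{D_n}})\bigr]$ with $\E\bigl[\frac{|\nabla u|^2}{u}(t,X_t)\bigr] = \Ent_M'(t)$.

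This converts Remark~\ref{remark_trivial} into the clean identity
\begin{equation*}
  \Ent_M'(t) = \frac{|\nabla u|^2}{u}(0,x) + \int_0^t \E\left[\left(2u|\nabla\nabla\log u|^2 + 2u\left(\Ric - \tfrac12\tfrac{\partial g}{\partial t}\right)(\nabla\log u, \nabla\log u)\right)(s,X_s)\right]ds.
\end{equation*}
Because $\Ent_M(t)$ is assumed linear on $[0,T]$, its derivative $\Ent_M'(t)$ is constant, so the integral term is constant in $t$; differentiating, the integrand vanishes for almost every $t$. The hypothesis $\frac{\partial g}{\partial t} \leq 2\Ric$ makes $\Ric - \tfrac12\tfrac{\partial g}{\partial t} \geq 0$, so both summands inside the expectation are pointwise nonnegative; together with $u>0$ and the strict positivity of the heat kernel $p(t,x,\cdot)$ this forces, for all $t$, both
\begin{equation*}
  \nabla\nabla\log u \equiv 0 \quad\text{and}\quad \left(\Ric - \tfrac12\tfrac{\partial g}{\partial t}\right)(\nabla\log u, \nabla\log u) \equiv 0.
\end{equation*}

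From here the two conclusions follow as before. For part~(1), the identity $\nabla\nabla\log u \equiv 0$ gives, verbatim as in the proof of Theorem~\ref{linear entropy means separation}, that $|\nabla\log u|$ depends only on $t$; combining $\Delta\log u = \tr(\nabla\nabla\log u) = 0$ with the backward heat equation yields $\frac{\partial\log u}{\partial t} = -|\nabla\log u|^2(t)$, which integrates to the separated form $u(t,y) = \psi(y)\phi(t)$ and then to equation~\eqref{phi-psi equation}. For part~(2), positive definiteness of $2\Ric - \frac{\partial g}{\partial t}$ together with the vanishing of the quadratic form forces $\nabla\log u \equiv 0$, so $u$ is constant in space, and the backward heat equation then makes it constant in time, exactly as in Proposition~\ref{positivedefinite} and the last step of Theorem~\ref{thm:exactly_linear}. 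I expect the only genuinely delicate point to be the limit exchange in the first paragraph: one must verify that the uniform bound \eqref{Condition_Bounded1} really upgrades \eqref{grad_u_submartingale} to a true submartingale and justifies the bounded-convergence passage, since all of the rigidity is subsequently read off from the resulting integral identity.
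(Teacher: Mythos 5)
Your proposal is correct and follows essentially the same route as the paper: linearity of $\Ent_M$ forces $t\mapsto\E\bigl[\tfrac{|\nabla u|^2}{u}(t,X_t)\bigr]$ to be constant, the identity of Remark~\ref{remark_trivial} (justified in the limit $D_n\uparrow M$ by the uniform bound \eqref{Condition_Bounded1} and non-explosion) then kills the nonnegative second-variation integrand, and the conclusions are read off exactly as in Theorem~\ref{linear entropy means separation} and Proposition~\ref{positivedefinite}. The only difference is that you spell out the bounded-convergence and differentiation steps that the paper leaves implicit.
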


\begin{proof}
  Since $\Ent_M$ is linear, we get $$t\mapsto\E\left[ \frac{|\nabla
      u|^2}{u}(t, X_t) \right]$$ is a constant function. By Remark
  \ref{remark_trivial} we may conclude that
  \begin{equation*}
    \E \left[ \left( 2u |\nabla\nabla\log u|^2+ 2u \left( \Ric 
    - \frac{1}{2} \frac{\partial g}{\partial t} \right)(\nabla\log u,\nabla\log u)\right)(t, X_t)\right]=0.
  \end{equation*}
  One can now apply the same arguments as in the proofs of
  Theorem~\ref{linear entropy means separation} and
  Proposition~\ref{positivedefinite}.
\end{proof}

\begin{remark}
  For the results above, condition \eqref{Condition_Bounded} has been
  only used to assure that \eqref{grad_u_submartingale} is a true
  submartingale. In terms of $\tau_n:=\tau_{D_n}$ a necessary and
  sufficient condition for the true submartingale property is that
  \begin{align}
    \label{Kabanov-Stricker}
    \liminf_{n\to \infty}\E\left[ \frac{|\nabla u|^2}{u}(\tau_{n},
      X_{\tau_{n}})\,1_{\{\tau_{n}\leq t\}} \right]=0,
  \end{align}
  see~for instance~\cite{Kabanov-Stricker:2003}.
\end{remark}

\begin{remark}
  One may always write
  \begin{align}
    \label{trivial_splitting}
    \E\left[ \frac{|\nabla u|^2}{u}(t \wedge \tau_{n}, X_{t \wedge
        \tau_{n}}) \right] = \E\left[ \frac{|\nabla u|^2}{u}(t,
      X_{t})\,1_{\{t<\tau_{n}\}} \right] +\E\left[ \frac{|\nabla
        u|^2}{u}(\tau_{n}, X_{\tau_{n}})\,1_{\{\tau_{n}\leq t\}}
    \right]
  \end{align}
  where the left-hand-side of Eq.~\eqref{trivial_splitting} is
  monotone in $t$ and $n$.  Thus, if $u$ is of sublinear growth, by
  letting $t\to\infty$ in~\eqref{trivial_splitting}, monotonicity of
  $n\mapsto\E\left[ \frac{|\nabla u|^2}{u}(\tau_{n},
    X_{\tau_{n}})\right]$ is obtained (without extra conditions).  In
  the proof to Theorem \ref{thm:exactly_linear} we used however
  monotonicity along deterministic times, i.e.~monotonicity of
  $t\mapsto\E\left[ \frac{|\nabla u|^2}{u}(t, X_t)\right]$ which
  follows from Eq.~\eqref{trivial_splitting}, as $n\to\infty$, but
  under only the additional hypothesis~\eqref{Kabanov-Stricker}.
\end{remark}


\end{document}